\documentclass{amsart}

\usepackage{mathtools}
\usepackage{enumitem}
\usepackage{hyperref}
\usepackage{cleveref}

\newtheorem{thm}{Theorem}
\newtheorem{lem}[thm]{Lemma}
\newtheorem{cor}[thm]{Corollary}
\newtheorem{obs}[thm]{Observation}

\theoremstyle{definition}
\newtheorem{dfn}[thm]{Definition}
\newtheorem{rem}[thm]{Remark}

\newcommand{\R}{\mathbb{R}}
\newcommand{\C}{\mathbb{C}}
\newcommand{\eps}{\varepsilon}
\newcommand\HNN{\widehat{N}}
\newcommand{\N}{\mathbb{N}}
\newcommand{\NN}{\mathcal{N}}
\newcommand{\MM}{\mathcal{M}}
\newcommand{\MMR}{\mathcal{M}_{s}}
\newcommand{\PP}{\mathcal{P}}
\newcommand{\CC}{\mathcal{C}}
\newcommand{\CB}{\mathcal{C}_b}
\newcommand{\clc}{\overline{\mathrm{conv}}\,}

\title{A note on Rainwater's theorem}
\author{David Preiss}
\date{}

\begin{document}

\begin{abstract}
  We extend Rainwater's theorem that, for Baire measures on a compact Hausdorff space,
  a measure singular with respect to any measure from a $w^*$-compact convex set $M$ of probability measures sits on
  a Baire set which is $\mu$-null for every $\mu\in M$. Our main result shows that this statement holds
  under the weaker assumption of convex analyticity of $M$. In the standard way, this extends
  the Glicksberg--König--Seever decomposition theorem.
   We also give conditions for our results to hold for sets of signed measures.
\end{abstract}

\maketitle

The remarkable, and very useful (see, e.g.,~\cite{AM} for a recent application), result of Rainwater \cite{R} (most often referred to \cite[Lemma 9.4.3]{Ru}) gives conditions under which a measure
singular with respect to each measure from a certain collection $M$ sits on a set that is null for any measure from $M$. 
It concerns Baire probability measures on compact Hausdorff spaces and the main condition is that the collection of measures is convex
and compact in the $w^*$-topology. Although \cite{R} gives simple examples showing that
the convexity and $w^*$-compactness are ``vital'', we show that they can be usefully weakened. This leads also to a natural extension of the
Glicksberg--König--Seever decomposition of any measure as a sum of a measure absolutely continuous with respect a measure from $M$
and a measure singular to every measure from $M$.

Although not much would be lost by working in compact Hausdorff spaces, with possible applications in mind it is better to work
in an arbitrary topological space~$X$. For spaces of continuous functions we will use the notation $\CC(X,Y)$ for
functions with values in $Y$, $\CC(X)$ when $Y=\R$, and $\CC_b(X)$ for the space of bounded functions from $\CC(X)$.

For measure theoretical notions and results we refer to \cite{B}.
In particular, we recall that Baire sets are members of the smallest $\sigma$-algebra with respect to which
all continuous real-valued functions are measurable. It may be also described as the $\sigma$-algebra generated by
the zero-sets, i.e., sets of the form $f^{-1}(0)$ where $f\in\CC(X)$. To state some of our results in a more
precise form, we will denote by $Z_\sigma$ (zero-sigma) the collection of countable unions of zero-sets.

We denote by $\MM_s(X)\supset \MM(X) \supset \PP(X)$ the collections
of signed ($=$ real-valued), non-negative and probability Baire measures (defined on Baire sets in~$X$), respectively.
In addition to the standard notation for the integral of a function $h$ with respect to $\mu$ we will
often write $\mu(h)$. If $h$ is $\mu$-integrable, we denote by $h\mu$ the measure defined by
$(h\mu)(E):=\int_E h\,d\mu$.

We say that $\mu$ sits on a Baire set $B\subset X$ if $\mu(E)=0$ for every subset $E$ of $ X\setminus B$.
Measures $\mu$ and $\nu$ are said to be mutually singular, denoted $\mu\perp\nu$, if they sit on
disjoint Baire sets.
Every $\mu\in\MMR(X)$ can be uniquely expressed as $\mu=\mu^+-\mu^-$, where $\mu^+,\mu^-\in \MM(X)$ and $\mu^+\perp \mu^-$.
We also denote $|\mu|=\mu^++\mu^-$ and recall that for every non-negative $f\in\CB(X)$,
$|\mu|(f)=\sup\{\mu(g): g\in \CC(X),\, |g| \le f\}$.

The spaces of measures we work with are in a natural way subspaces of the dual to the space $\CB(X)$ equipped with the supremum norm.
From their norm topology we will however use only the notation
$\|\mu\|=|\mu|(1)$. Instead, we will always consider $\MMR(X)$ (and so also $\MM(X)$ and $\PP(X)$) equipped with the weak$^*$-topology,
i.e., the coarsest topology which makes all functions $\mu\to\mu(f)$, $f\in\CB(X)$, continuous.
Below, we will therefore skip the prefix $w^*$, even in Rainwater's theorem.

\section*{Rainwater's theorem}

Much of what we do here starts from the following Lemma, whose statement and proof 
are minor modifications of Rainwater's argument from \cite{R}.

\begin{lem}\label{rn}
  Let $M,N\subset\MMR(X)$ be compact convex sets such that
  $\mu\perp\nu$ whenever $\mu\in M$ and $\nu\in N$.
  Then for every $\eps>0$ there is $g\in \CC(X,[0,1])$ such that
  $|\mu(1-g)|<\eps$ for every $\mu\in M$ and $|\nu(g)|<\eps$ for every $\nu\in N$.
\end{lem}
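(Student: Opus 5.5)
The plan is to reduce the uniform statement to a single pair of mutually singular measures, using Hahn--Banach separation and barycenters, which is essentially Rainwater's original minimax argument.

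\emph{Step 1 (one pair of measures).} I first show the following. If $\lambda,\rho\in\MMR(X)$ and $\lambda\perp\rho$, then for every $\eta>0$ there is $g\in\CC(X,[0,1])$ with $|\lambda|(1-g)+|\rho|(g)<\eta$. Let $\lambda$ and $\rho$ sit on disjoint Baire sets $A$ and $B$. Baire measures are inner regular with respect to zero-sets. Hence there are zero-sets $Z\subset A$ and $W\subset B$ with $|\lambda|(A\setminus Z)<\eta/2$ and $|\rho|(B\setminus W)<\eta/2$. Since $Z=f_1^{-1}(0)$ and $W=f_2^{-1}(0)$ are disjoint, the function $g=|f_2|/(|f_1|+|f_2|)$ is continuous, takes values in $[0,1]$, and satisfies $g=1$ on $Z$ and $g=0$ on $W$. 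This $g$ works.

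\emph{Step 2 (separation).} Each $\mu\in M$ gives the function $\mu\mapsto\mu(f)$, and for $f\in\CB(X)$ this function is continuous on the compact set $M$. The same holds on $N$. For $g\in\CC(X,[0,1])$, let $\Phi(g)\in\CC(M)\times\CC(N)$ be the pair $\bigl(\mu\mapsto\mu(1-g),\ \nu\mapsto\nu(g)\bigr)$. The set $C=\{\Phi(g)\}$ is convex, because $\Phi$ is affine in $g$ and the domain of $g$ is convex. The lemma is exactly the claim that $0$ lies in the sup-norm closure of $C$.

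Suppose it does not. By Hahn--Banach and the Riesz representation theorem on the compact spaces $M$ and $N$, there are signed Radon measures $\alpha$ on $M$ and $\beta$ on $N$, and a $\delta>0$, such that
\[
\int_M \mu(1-g)\,d\alpha(\mu)+\int_N \nu(g)\,d\beta(\nu)\ \ge\ \delta\qquad\text{for all } g\in\CC(X,[0,1]).
\]

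\emph{Step 3 (barycenters).} Split $\alpha=\alpha^+-\alpha^-$ and $\beta=\beta^+-\beta^-$. If, say, $\alpha^+\ne0$, then $M$ is a compact convex subset of the locally convex space $\MMR(X)$ with the $w^*$-topology. Hence the normalized measure $\alpha^+/\|\alpha^+\|$ has a barycenter $\mu_1\in M$, meaning $\mu_1(f)=\int\mu(f)\,d\alpha^+/\|\alpha^+\|$ for all $f\in\CB(X)$. The key point is that the functional so defined lies in the closed convex hull of $M$, and that hull is $M$ itself. Doing this for all four parts gives
\[
\lambda=a\mu_1-b\mu_2,\qquad \rho=c\nu_1-d\nu_2,
\]
with $\mu_i\in M$, $\nu_j\in N$ and $a,b,c,d\ge0$. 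These satisfy $\lambda(1-g)+\rho(g)\ge\delta$ for every $g$.

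\emph{Step 4 (contradiction).} Each $\mu_i$ is singular to each $\nu_j$. Intersecting and uniting the finitely many separating Baire sets shows that $\lambda\perp\rho$. Step~1 with $\eta=\delta$ then gives a $g$ with
\[
\lambda(1-g)+\rho(g)\le|\lambda|(1-g)+|\rho|(g)<\delta,
\]
which is a contradiction. So $0\in\overline{C}$, and choosing $g$ with $\|\Phi(g)\|<\eps$ proves the lemma.

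I expect the main obstacle to be the barycenter step. One has to check that $\int\mu(\cdot)\,d\alpha^+$ really is represented by a Baire measure in $M$. This follows from compactness and convexity of $M$ in the $w^*$-topology via the standard separation argument for barycenters, so no regularity of $X$ is needed. A secondary point is the inner regularity by zero-sets used in Step~1, which is a standard property of Baire measures.
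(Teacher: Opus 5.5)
Your proof is correct. Its skeleton matches the paper's: a duality argument reduces the uniform statement to a single pair of mutually singular measures. The tools differ.

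The paper works in three moves:
\begin{enumerate}
\item It replaces $M$ and $N$ by $M-M$ and $N-N$, which are still compact, convex and pairwise singular.
\item It quotes an extension of von Neumann's minimax theorem for $\mu(1-g)+\nu(g)$ to pass from $\sup_{\mu,\nu}\inf_g\le 0$ to $\inf_g\sup_{\mu,\nu}\le 0$.
\item It uses the symmetry of the new sets to recover the absolute values.
\end{enumerate}

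You instead prove the needed minimax inequality by hand, using Hahn--Banach in $\CC(M)\times\CC(N)$ with the sup-norm, Riesz representation, and barycenters of the Jordan parts of the separating measures. The absolute values are built into the sup-norm. The differences $a\mu_1-b\mu_2$ and $c\nu_1-d\nu_2$ coming from the Jordan decomposition play the role of the paper's symmetrized sets $M-M$ and $N-N$.

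What each route buys:
\begin{itemize}
\item The paper's route is shorter. It relies on an external minimax theorem and leaves implicit the single-pair fact behind the claim that $\sup_{\mu,\nu}\inf_g\bigl(\mu(1-g)+\nu(g)\bigr)\le 0$.
\item Your route is self-contained and makes that fact explicit. Your Step~1 obtains it from inner regularity of Baire measures by zero-sets, which is also what the paper's closing Remark relies on.
\item Your barycenter step, which places the barycenter in $M$ by separation, is the same argument the paper gives later when showing $\clc M=\{\hat\lambda:\lambda\in\PP(M)\}$.
\end{itemize}

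One trivial slip: in Step~2 it is each $f\in\CB(X)$, not each $\mu\in M$, that gives the continuous function $\mu\mapsto\mu(f)$.
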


\begin{proof}
  Replacing $M$ by $\{\mu_1-\mu_2: \mu_1,\mu_2\in M\}$ if necessary, we may assume that
  $-\mu\in M$ whenever $\mu\in M$. Similarly, we may assume that $-\nu\in N$ whenever $\nu\in N$.

  Define $f\colon (M\times N)\times \CC(X,[0,1])\to\R$ by
  $f((\mu,\nu),g):=\mu(1-g)+\nu(g)$.
  Then $M\times N$ is a compact convex subset of $\MM_s(X)\times\MMR(X)$
  and $f$ satisfies the assumptions of various extensions of the
  von Neumann minimax theorem (see, for example \cite[page 116]{G}, \cite[Theorem 2]{KF} or \cite[Corollary 3.3]{S}).
  Hence
  \begin{equation}\label{rn.E1}
    \adjustlimits\sup_{\mu,\nu}\inf_{g} \mu(1-g)+\nu(g) = \adjustlimits\inf_g\sup_{\mu,\nu} \mu(1-g)+\nu(g).
  \end{equation}
  Since the left side of \eqref{rn.E1} is less or equal to zero, so is its right side.
  Thus, given $\eps>0$ there is $g\in\CC(X,[0,1])$ such that
  $\mu(1-g)+\nu(g)<\eps$ for every $\mu\in M$ and $\nu\in N$. Choosing $\sigma,\tau\in\{-1,1\}$
  such that $\sigma\mu(1-g) = |\mu(1-g)|$ and $\tau\nu(g)=|\nu(g)|$ and recalling that
  $\sigma\mu\in M$ and $\tau\nu\in N$, we get  $|\mu(1-g)|+|\nu(g)|<\eps$, implying the statement.
\end{proof}

A straightforward consequence of \Cref{rn} is the following variant of
Rainwater's theorem.

\begin{thm}\label{r1}
  Let $M,N \subset\MM(X)$ be compact convex sets such that $\mu\perp\nu$ whenever $\mu\in M$ and $\nu\in N$.
  Then there are disjoint $Z_\sigma$ sets $B_M,B_N$ such that every $\mu\in M$ sits on $B_M$
  and every $\nu\in N$ sits on $B_N$.
\end{thm}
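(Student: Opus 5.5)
Apply \Cref{rn} with $\eps=2^{-n}$ for each $n\in\N$. This gives $g_n\in\CC(X,[0,1])$ with $\mu(1-g_n)<2^{-n}$ for all $\mu\in M$ and $\nu(g_n)<2^{-n}$ for all $\nu\in N$. Since $M,N\subset\MM(X)$, the measures are non-negative and the absolute values can be dropped. The aim is to turn these approximate separations into exact ones with a Borel--Cantelli type argument along a fixed sequence.

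\textbf{Step 1: a single separating function.} Since $\sum_n 2^{-n}<\infty$, the series $G:=\sum_n g_n$ and $H:=\sum_n(1-g_n)$ have finite integral: $\nu(G)\le 1$ for every $\nu\in N$ and $\mu(H)\le 1$ for every $\mu\in M$ (monotone convergence). Hence $G<\infty$ $\nu$-a.e. for each $\nu\in N$, and $H<\infty$ $\mu$-a.e. for each $\mu\in M$.

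\textbf{Step 2: candidate sets.} Define
\[
B_N:=\bigcup_k\bigcap_{n\ge k}\{g_n\le 1/3\},\qquad B_M:=\bigcup_k\bigcap_{n\ge k}\{g_n\ge 2/3\}.
\]
Each $\{g_n\le 1/3\}$ is a zero-set, namely the zero set of $\max(g_n-1/3,0)$. A countable intersection of zero-sets is a zero-set, via $\sum_n 2^{-n}\min(h_n,1)$ with $h_n\ge0$. So $B_M$ and $B_N$ are countable unions of zero-sets, i.e.\ $Z_\sigma$ sets.

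\textbf{Step 3: disjointness.} A point of $B_M\cap B_N$ would satisfy both $g_n\le1/3$ and $g_n\ge2/3$ for all large $n$, which is impossible.

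\textbf{Step 4: the measures sit on these sets.} Fix $\nu\in N$ and $x\notin B_N$. Then $g_n(x)>1/3$ for infinitely many $n$, so $G(x)=\infty$. Hence $X\setminus B_N\subset\{G=\infty\}$, which is $\nu$-null. Since $\nu\ge0$, every Baire subset $E$ of $X\setminus B_N$ has $\nu(E)=0$, so $\nu$ sits on $B_N$. The same argument with $H$ in place of $G$ shows that each $\mu\in M$ sits on $B_M$.

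Once \Cref{rn} is available there is no real obstacle. The only point needing care is checking that the sets are $Z_\sigma$ rather than merely Baire, and that is exactly why the sets are built from closed conditions $g_n\le 1/3$ and $g_n\ge 2/3$ instead of open ones.
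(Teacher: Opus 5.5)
Your proof is correct and follows the paper's argument: apply Lemma~\ref{rn} with summable $\eps_n$, take $B_M$, $B_N$ to be $\bigcup_k\bigcap_{n\ge k}$ of the zero-sets where $g_n$ is near $1$, resp.\ near $0$, and finish with a Borel--Cantelli argument. The differences are only cosmetic: you use fixed thresholds $1/3$, $2/3$ and the a.e.\ finiteness of $\sum_n g_n$ and $\sum_n(1-g_n)$, whereas the paper uses $\eps_n=4^{-n}$, shrinking thresholds $2^{-n}$, $1-2^{-n}$, and Chebyshev's inequality with a tail sum.
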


\begin{proof}
  Let $g_n\in \CC(X,[0,1])$ come from \Cref{rn} with $\eps_n:=4^{-n}$ and put $E_n:=\{x: g_n(x)\ge 1- 2^{-n}\}$
  and $F_n:=\{x: g_n(x)\le 2^{-n}\}$. Then $E_n$ and $F_n$ are zero-sets
  such that $\mu(X\setminus E_n)\le 2^n\mu(1-g_n)\le 2^{-n}$ for $\mu\in M$, and 
  $\nu(X\setminus F_n)\le 2^n\nu(g_n)\le 2^{-n}$ for $\nu\in N$.
  Hence it suffices to let
  \[B_M:=\bigcup_{k=1}^\infty \bigcap_{n=k}^\infty E_n\text{ and }
    B_N:=\bigcup_{k=1}^\infty \bigcap_{n=k}^\infty F_n.\qedhere\]
\end{proof}

Recall that in Rainwater's theorem $M$ consists of a single $\mu\in\MMR(X)$.
This is a straightforward special case of \Cref{r1} since
one may replace $\mu$ by $|\mu|$. It is however not clear whether the result
holds when $M$ is a set of signed measures. (Notice that the set $\{|\mu|:\mu\in M\}$ need be neither compact nor convex.)
We give a condition, called \ref{sm} for want of a better name,
under which this is the case.

\begin{enumerate}[leftmargin=0cm,labelwidth=1.5em,itemindent=\labelwidth,label=$(\star)$]
\item\label{sm}
 There is a countable set $H\subset \CC(X,[-1,1])$ such that for every $\mu\in M$,
  \[|\mu|(X)=\sup\{\mu(h): h\in H\}.\]  
\end{enumerate}
Particular cases when \ref{sm} holds are $M\subset\MM(X)$, $M$ is  a singleton, or any $M\subset \MMR(X)$ when $X$ is a separable metric space.
The first two cases are obvious, and to see the last one we take a countable basis $\mathcal B$ for the topology of $X$
which is closed under finite unions, for each $B_1,B_2\in \mathcal B$ with $\overline{B_1}\cap\overline{B_2}=\emptyset$ choose
$h_{B_1,B_2}\in\CC(X,[-1,1]) $ such that $h_{B_1,B_2}(x)=1$ for $x\in B_1$ and 
$h_{B_1,B_2}(x)=-1$ for $x\in B_2$, and let $H$ be the collection of the functions~$h_{B_1,B_2}$. 

A variant of Rainwater's theorem in which $M$ is allowed to be a set of signed measures is another
consequence of \Cref{rn}. Notice however, that the set on which measures from $M$
sit is no longer claimed to be $Z_\sigma$.

\begin{thm}\label{rl0}
  Let $M\subset\MM_s(X)$ satisfying \ref{sm} and $N \subset\MM(X)$ be compact convex sets
  such that $\mu\perp\nu$ whenever $\mu\in M$ and $\nu\in N$.
  Then there is a $Z_\sigma$ set $B$ such that every $\mu\in M$ sits on $X\setminus B$
  and every $\nu\in N$ sits on $B$.
\end{thm}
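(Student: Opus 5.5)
The plan is to run the argument of \Cref{r1} for $N$ alone, so that the $Z_\sigma$ set $B$ comes from the sets $F_n$, and to use \ref{sm} to show that every $\mu\in M$ is $|\mu|$-null on $B$. The difficulty is that \Cref{rn} only controls the numbers $\mu(1-g)$. For signed $\mu$ these say nothing about $|\mu|$. The remedy is to feed into \Cref{rn} not $M$ itself but its multiples $hM=\{h\mu:\mu\in M\}$ for $h\in H$. The conclusion of \Cref{rn} then gives control of $\mu(h(1-g))$, and taking the supremum over $h$ recovers $|\mu|$.

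Enumerate $H=\{h_1,h_2,\dots\}$. For $h\in\CB(X)$ the map $\mu\mapsto h\mu$ is linear and continuous on $\MMR(X)$, since $(h\mu)(f)=\mu(hf)$ and $hf\in\CB(X)$. Hence each $h_iM$ is compact and convex. Moreover $h_i\mu\perp\nu$ for $\mu\in M$, $\nu\in N$, because $h_i\mu$ sits wherever $\mu$ sits. Let $M_n$ be the convex hull of $h_1M\cup\dots\cup h_nM$. As the convex hull of finitely many compact convex sets, it is compact (it is a continuous image of a simplex times a product of compact sets). Every member of $M_n$ is a convex combination of measures singular to all $\nu\in N$, and so it sits on a finite union of Baire sets that are null for $\nu$; thus it is singular to every $\nu\in N$. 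Applying \Cref{rn} to $M_n$ and $N$ with $\eps_n=4^{-n}$ gives $g_n\in\CC(X,[0,1])$ such that
\[
  |\mu(h_i(1-g_n))|<4^{-n}\quad(i\le n,\ \mu\in M),\qquad \nu(g_n)<4^{-n}\quad(\nu\in N).
\]

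Exactly as in \Cref{r1}, put $F_n=\{g_n\le 2^{-n}\}$, $B_k=\bigcap_{n\ge k}F_n$ and $B=\bigcup_k B_k$. Then $B$ is $Z_\sigma$ and every $\nu\in N$ sits on $B$. It remains to show that $|\mu|(B_k)=0$ for each $\mu\in M$ and each $k$. Fix $h=h_i$. For $n\ge\max(i,k)$ we have $\mu(h)=\mu(hg_n)+O(4^{-n})$. Split $\mu(hg_n)$ into its parts over $B_k$ and over $X\setminus B_k$. On $B_k$ we have $|hg_n|\le 2^{-n}$, so that part is at most $2^{-n}|\mu|(X)$. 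On $X\setminus B_k$ we have $|hg_n|\le1$, so that part is at most $|\mu|(X\setminus B_k)$. Letting $n\to\infty$ gives $\mu(h)\le|\mu|(X\setminus B_k)$. Taking the supremum over $h\in H$ and using \ref{sm}, we obtain $|\mu|(X)\le|\mu|(X\setminus B_k)$, hence $|\mu|(B_k)=0$. Therefore $|\mu|(B)=0$, which means that $\mu$ sits on $X\setminus B$.

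The main obstacle I expect is the first step: choosing what to apply \Cref{rn} to so that the resulting single $g_n$ handles all the test functions $h_i$ at once. The finite convex hulls $M_n$ are designed for this, and the only points needing checking are their compactness and their singularity to $N$. Once that is in place, the final estimate is a direct computation.
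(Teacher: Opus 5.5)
Your proof is correct and follows essentially the same route as the paper: apply \Cref{rn} to the multiples $h\mu$ ($h\in H$, $\mu\in M$) against $N$, take $B$ to be the $\liminf$ of the zero-sets $\{g_n\le 2^{-n}\}$, and recover $|\mu|$ through \ref{sm}. The difference is only bookkeeping. The paper applies \Cref{rn} to a single $h_iM$ at each step, lists every element of $H$ infinitely often, and bounds $|\mu|(1-g_i)$ via $\mu(h_ig_i)\le|\mu|(g_i)$. You instead use the convex hulls of $h_1M\cup\dots\cup h_nM$, so that one $g_n$ serves all $h_i$ with $i\le n$, and you correctly carry out the extra compactness and singularity checks this requires.
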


\begin{proof}
  Choose $h_1,h_2,\ldots\in \CC(X,[-1,1])$ with
  $\|\mu\| =\sup \{\mu(h_i): i\in\N\}$ for every $\mu\in M$,  such that in the
  sequence $h_i$, $i=1,2,\dots$ each element appears infinitely often.

  For each $i$ use \Cref{rn}
  with the sets $M_i:=\{h_i\mu: \nu\in N\}$ and $N$, and with $\eps=4^{-i}$,
  to find $g_{i}\in \CC(X,[0,1])$ such that
  $\mu(h_i(1-g_i))< 4^{-i}$ for every $\mu\in M$ and
  $\nu(g_i)<4^{-i}$ for every $\nu\in N$.
  Put
  \[D_i:=\{x\in X: g_{i}(x) \le 2^{-i}\},\
    B_j:=\bigcap_{i=j}^\infty D_{i}\text{ and } B:=\bigcup_{j=1}^\infty B_j.\]

  For any $\nu\in N$ we get
  $\nu(X\setminus D_i)\le 2^{i}\nu(g_i) \le 2^{-i}$, hence $\nu(X\setminus B_j)\le 2^{-j+1}$,
  and $\nu(X\setminus B)=0$. Hence $\nu$ sits on $B$.

  It remains to show that every $\mu\in M$ sits on $X\setminus B$, i.e, that $|\mu|(B)=0$.
  For this, given $j\in \N$, find $i>j$ such that $\|\mu\|-\mu(h_i)< 4^{-j-1}$. Since
  $|h_ig_i|\le g_i$, $\mu(h_ig_i)\le |\mu|(g)$, and hence
  \[|\mu|(1-g_i)=\|\mu\|-|\mu|(g_i)\le 4^{-i}+\mu(h_i)-\mu(h_ig_i)\le 2\star4^{-j-1}.\]
  Since $1-g_i\ge 1/2$ on $D_i$, we see that $|\mu|(B_j)\le |\mu|(D_i)\le 4^{-j}$
  for every~$j$. Since $B_j\subset B_{j+1}$, this shows
   $|\mu|(B_j)=0$ for every~$j$ and so $|\mu|(B)=0$.
\end{proof}

\section*{Convexly analytic sets of measures}

The main aim of this note is to replace the assumption on $N$ in Rainwater's theorem
(\Cref{r1} where $M$ consists of a single measure)
by its convex analyticity, which is a notion introduced by Petr Holický in~\cite{H}.

We recall several basic notions and facts
from the descriptive set theory.
A multi-valued map $\phi: S\to T$
between topological spaces $S$ and $T$ is called usco (upper semicontinuous compact valued)
if its values are compact subsets of $T$ and for every open set $G\subset T$ the set
$\{s\in S: \phi(s)\subset G\}$ is open. The $\phi$-image of $U\subset S$ is defined by
$\phi(U):=\bigcup_{u\in U} \phi(u)$. We notice that $\phi(U)$ is compact when $U$ is compact.
A set $U\subset S$ is called analytic if it is an usco image of a polish space (metrizable by a complete separable metric).
Letting $\NN$ be the space of sequences of positive integers with the product topology and
recalling that every nonempty polish space is a continuous image of $\NN$,
we see that $U$ is analytic if and only if it is an usco image of $\NN$.

We will use the following standard notation for certain subsets of $\NN$:
\begin{align*}
  \NN_{\sigma_1,\dots,\sigma_k}&:=\{\tau\in\NN: \tau_i\le \sigma_i\text{ for $i=1,\dots,k$}\}
\text{ for $\sigma_1,\dots,\sigma_k\in\N$,}\\  
  \shortintertext{and}
  \NN_\sigma&:=\{\tau\in\NN: \tau_i\le \sigma_i\text{ for $i=1,2,\dots$}\} \text{ for $\sigma\in\NN$.}
\end{align*}

We will also use the observations that
the sets $\NN_\sigma$ are compact, 
every compact subset of $\NN$ is contained in some $\NN_\sigma$ and,
when $\phi:\NN\to T$ is usco, then $\psi(\sigma):=\phi(\NN_\sigma)$
is also usco and $\psi(\NN)=\phi(\NN)$.

\begin{dfn}\label{ca0}
  A subset $U$ of a locally convex space $T$ is said to be convexly analytic if
  there is an usco mapping 
  $\phi:\NN\to T$
  such that $U=\phi(\NN)$ and the closed convex hull of $\phi(K)$ is contained in $U$ for every compact $K\subset\NN$.
\end{dfn}

Recalling again that every nonempty polish space is a continuous image of~$\NN$, we see
that a set $U\subset T$ is convexly analytic provided it is an image of an usco mapping
$\phi$ of a polish space $P$ such that
the closed convex hull of $\phi(K)$ is contained in $U$ for every compact $K\subset P$.

Since we will use convex analyticity only in the space $\MMR(X)$, we recall the relation
of convexity and the notion of barycenter only in this special case.
When $\lambda\in\PP(\MMR(X))$ and $\int \|\mu\|\,d\lambda(\mu)<\infty$, its \emph{barycenter}
is the measure $\hat\lambda\in\MMR(X)$ defined by 
$\hat\lambda(g):=\int \mu(g)\,d\lambda(\mu)$ for $g\in \CB(X)$.
If $M\subset\MMR(X)$ is compact, the map $\lambda\in\PP(M)\to\hat\lambda\in\MMR(X)$ is continuous.
Hence the set $\{\hat\lambda: \lambda\in \PP(M)\}$ is a compact and convex
subset of $\MM_s(X)$ containing $M$, and so also $\clc M$. Consequently, $\clc M$ is compact.
Moreover, for any $g\in\CB(X)$ and $\lambda\in\PP(M)$ we have
$\hat\lambda(g)\le\max_{\mu\in M} \mu(g)$, and so the Hahn-Banach theorem implies that
$\clc M = \{\hat\lambda: \lambda\in \PP(M)\}$.

Although the following simple fact showing a way of naturally enlarging an analytic set to a convexly analytic set
clearly holds more generally, we give it only in its special case that will be used in the proof of \Cref{T-4}.

\begin{obs}\label{a1}
   Suppose $\phi$ is an usco map of $\NN$ to $\MM_s(X)$.
   Then the map $\psi$ assigning to $\sigma\in\NN$ the closed convex hull of $\phi(\NN_\sigma)$ is usco.
\end{obs}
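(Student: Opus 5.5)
The plan is to check the two defining conditions for $\psi$ in turn: compactness of its values and upper semicontinuity. Both should reduce to the barycenter description of closed convex hulls recalled above, applied to the compact sets $\phi(\NN_\sigma)$.

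\emph{Compact values.} Since $\NN_\sigma$ is compact and $\phi$ is usco, $K_\sigma:=\phi(\NN_\sigma)$ is a compact subset of $\MMR(X)$. The discussion preceding the Observation then applies: $\psi(\sigma)=\clc K_\sigma$ equals $\{\hat\lambda:\lambda\in\PP(K_\sigma)\}$, which is compact.

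\emph{Upper semicontinuity.} Fix $\sigma$ and an open $G\supset\psi(\sigma)$; we must find a neighbourhood of $\sigma$ on which $\psi(\tau)\subset G$.
\begin{enumerate}
\item Since $\psi(\sigma)$ is compact and $\MMR(X)$ with the $w^*$-topology is locally convex, choose a convex open neighbourhood $V$ of $0$ with $\psi(\sigma)+V+V\subset G$. This is the standard compactness argument in a topological vector space.
\item Let $k$ be the first index and put $W:=\{\tau:\tau_i\le\sigma_i \text{ for } i\le k\}$. For $\tau\in W$ we have $\NN_\tau\subset\NN_{\sigma'}$, where $\sigma'_i=\sigma_i$ for $i\le k$ and $\sigma'_i=\max(\sigma_i,\tau_i)$ for $i>k$. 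This alone does not control the tail of $\tau$, so the neighbourhood must come from a compactness argument.
\item Set $L_m:=\NN_{\sigma_1,\dots,\sigma_m}\cap\{\tau:\tau_i\le m \text{ for } i>m\}$ when $m\ge \max_{i\le m}\sigma_i$. More simply, consider $\NN_{\sigma^{(m)}}$, where $\sigma^{(m)}_i=\sigma_i$ for $i\le m$ and $\sigma^{(m)}_i=\max(\sigma_i,m)$ otherwise. These decrease to $\NN_\sigma$, and $\bigcap_m \NN_{\sigma^{(m)}}=\NN_\sigma$.
\item The usco property together with compactness gives an $m$ with $\phi(\NN_{\sigma^{(m)}})\subset K_\sigma+V$. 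Indeed, $\phi^{-1}$-type closed sets $\{\tau:\phi(\tau)\not\subset K_\sigma+V\}$ are closed, and they are disjoint from $\NN_\sigma$. Intersecting them with the decreasing compact sets $\NN_{\sigma^{(m)}}$, whose intersection is $\NN_\sigma$, forces the intersection to be empty for some $m$.
\item Now take the neighbourhood $W_m:=\{\tau:\tau_i\le\sigma^{(m)}_i \text{ for } i\le m\}$ of $\sigma$.
\end{enumerate}

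The main obstacle is that for $\tau\in W_m$ the set $\NN_\tau$ is not contained in $\NN_{\sigma^{(m)}}$, because later coordinates of $\tau$ may be large. For this reason I expect the right form of the compactness step to work with sequences instead. Suppose the claim fails. Then there are $\tau^{(n)}\to\sigma$ and $\mu_n\in\psi(\tau^{(n)})\setminus G$. Any point $\rho^{(n)}\in\NN_{\tau^{(n)}}$ has $\rho^{(n)}_i\le\tau^{(n)}_i$, and $\tau^{(n)}_i=\sigma_i$ for $n\ge n_i$. Hence the set $C:=\NN_\sigma\cup\bigcup_n \NN_{\tau^{(n)}}$ is contained in $\NN_{\bar\sigma}$ with $\bar\sigma_i:=\max(\sigma_i,\max_{n<n_i}\tau^{(n)}_i)$, and so $C$ is relatively compact.

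Then $\phi(\NN_{\bar\sigma})$ is compact, and $\psi(\tau^{(n)})\subset\clc\phi(\NN_{\bar\sigma})$, a fixed compact set. By the barycenter description, write $\mu_n=\hat\lambda_n$ with $\lambda_n\in\PP(\phi(\NN_{\tau^{(n)}}))$. Pass to a cluster point $\lambda$ of the net $(\lambda_n)$ in $\PP(\phi(\NN_{\bar\sigma}))$; by continuity of barycenters, $\hat\lambda$ is a cluster point of $(\mu_n)$ and so lies outside $G$.

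It remains to show that $\lambda$ is concentrated on $K_\sigma$. For closed $F\supset K_\sigma$, usco and $\tau^{(n)}\to\sigma$ give $\phi(\NN_{\tau^{(n)}})\subset U$ eventually, for any open $U\supset K_\sigma$. The argument is the same compactness one, using that the points of $\NN_{\tau^{(n)}}$ accumulate only in $\NN_\sigma$. Hence $\lambda(U)\ge\limsup\lambda_n$ of a closed neighbourhood, which equals $1$, and so $\lambda(K_\sigma)=1$. This gives $\hat\lambda\in\psi(\sigma)\subset G$, a contradiction. Sequences can be replaced by nets if the topology of $\MMR(X)$ requires it, since $\NN$ is metrizable and only the $\tau^{(n)}$ need to be a sequence.
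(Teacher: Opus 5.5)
Your closing sequential argument is correct, but it is a different and heavier route than the paper's. The paper argues directly. It takes a convex open $V\ni 0$ with $\overline{V+\psi(\sigma)}\subset G$. The open set $\{\rho:\phi(\rho)\subset V+\psi(\sigma)\}$ contains the compact set $\NN_\sigma$, hence contains some $\NN_{\sigma_1,\dots,\sigma_k}$: cover $\NN_\sigma$ by finitely many cylinders determined by at most $k$ coordinates, and note that every $\tau\in\NN_{\sigma_1,\dots,\sigma_k}$ agrees in its first $k$ coordinates with a point of $\NN_\sigma$. Since $\tau\in\NN_{\sigma_1,\dots,\sigma_k}$ implies $\NN_\tau\subset\NN_{\sigma_1,\dots,\sigma_k}$, and $\overline{V+\psi(\sigma)}$ is closed and convex, one gets $\psi(\tau)\subset G$ on the whole neighbourhood $\NN_{\sigma_1,\dots,\sigma_k}$ of $\sigma$. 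So the difficulty with large tail coordinates of $\tau$, which made you abandon steps 1--5, disappears once you use the open, non-compact set $\NN_{\sigma_1,\dots,\sigma_k}$ itself rather than the compact sets $\NN_{\sigma^{(m)}}$ (which, incidentally, are not decreasing in $m$). Your contradiction argument along $\tau^{(n)}\to\sigma$ checks out: barycentric representation $\mu_n=\hat\lambda_n$, a cluster point $\lambda$ in $\PP(\phi(\NN_{\bar\sigma}))$, continuity of the barycenter map, and showing $\lambda$ is carried by $K_\sigma$. Its real content is that $\phi(\NN_{\tau^{(n)}})\subset U$ eventually for every open $U\supset K_\sigma$, because points of $\NN_{\tau^{(n)}}$ accumulate only in $\NN_\sigma$. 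But applying that fact with $U:=\psi(\sigma)+V$, where $V$ is your convex set from step 1, gives at once $\psi(\tau^{(n)})\subset\overline{\psi(\sigma)+V}\subset\psi(\sigma)+V+V\subset G$ for large $n$. This contradicts $\mu_n\notin G$, so the measures $\lambda_n$, the cluster point and the portmanteau step are superfluous. The paper's route thus buys brevity and generality: upper semicontinuity uses only local convexity, and barycenters enter only for compactness of the values. Yours needs extra machinery and some care at the end. With $\PP$ denoting Baire measures, $K_\sigma$ need not be a Baire subset of $\phi(\NN_{\bar\sigma})$. So rather than asserting $\lambda(K_\sigma)=1$, pass to the Radon extension, or conclude $\hat\lambda\in\clc K_\sigma$ directly from $\hat\lambda(g)\le\max_{\mu\in K_\sigma}\mu(g)$ for all $g\in\CB(X)$ via Hahn--Banach.
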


\begin{proof}
  From the above discussion we see that $\psi$ is compact valued. Hence we just need to show that it is upper semicontinuous.
  For that, assume $\sigma\in\NN$ and $G\supset \psi(\sigma)$ is open. Since $\psi(\sigma)$ is compact,
  there is an open convex set $V\ni 0$ such that $\overline{V+\psi(\sigma)}\subset G$. Then $V+\psi(\sigma)$ is an open set
  containing the compact set $\phi(\NN_\sigma)$, hence there is $k$ such that
  $\phi(\NN_{\sigma_1,\dots,\sigma_k})\subset V+\psi(\sigma)$.
  Since $\overline{V+\psi(\sigma)}$ is closed and convex,
  we get $\psi(\tau)\subset \overline{V+\psi(\sigma)}\subset G$ for every $\tau\in\NN_{\sigma_1,\dots,\sigma_k}$.
\end{proof}

\section*{Rainwater's theorem for convexly analytic sets of measures}

Here, in \Cref{T1}, we prove the main result of this note that replaces the compactness and convexity assumptions
of Rainwater's theorem by convex analyticity. Its proof uses one of the basic arguments to show
separation results for analytic sets.

\begin{dfn}\label{defsep}
  A measure $\mu\in\MMR(X)$ is said to be $\eps$-separated from a set $N\subset\MMR(X)$, where $\eps\ge 0$, if there is
  a Baire measurable function $f\colon X\to[0,1]$ such that
  $|\mu|(1-f)\le\eps$ and $|\nu|(f)\le\eps+2\|\nu^-\|$ for every $\nu\in N$.
\end{dfn}

\begin{lem}\label{s1}
  Let $N_1\subset N_2\subset\dots\subset\MMR(X)$,
  $\eps_1,\eps_2,\ldots\ge 0$, $\mu\in\MM_s(X)$ be $\eps_i$\nobreakdash-separated from $N_i$ for each $i$,
  and $\eps:=\limsup_{i\to\infty} \eps_i$. Then
  $\mu$ is $\eps$\nobreakdash-separated from 
  $\bigcup_{i=1}^\infty N_i$.
\end{lem}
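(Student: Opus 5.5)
The plan is to take, for each $i$, a Baire measurable $f_i\colon X\to[0,1]$ witnessing that $\mu$ is $\eps_i$-separated from $N_i$, and to pass to a limit of the $f_i$ in a way that preserves both inequalities. Everything happens inside $L^1$ of one finite measure, so only countably many measures need to be controlled at once. The obstruction is that $\bigcup_i N_i$ may be uncountable, so a single subsequence cannot be chosen to converge $\nu$-a.e.\ for every $\nu$. I would sidestep this with convex combinations: the bound $|\nu|(f)\le \eps_i+2\|\nu^-\|$ is convex in $f$, and so is $|\mu|(1-f)\le \eps_i$.

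First, pass to a subsequence along which $\eps_i\to\eps$. This is harmless for the conclusion, since every $\nu\in\bigcup_i N_i$ lies in $N_j$ for all large $j$, and the $N_i$ are increasing. By Komlós's theorem, or more elementarily by Mazur's lemma applied to a weakly convergent subsequence of $(f_i)$ in $L^2(|\mu|)$, we can find convex combinations $g_k\in\mathrm{conv}\{f_i: i\ge k\}$ that converge $|\mu|$-a.e. Since $\mu$ is only one measure, no uncountability problem arises here.

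We then set $f:=\limsup_k g_k$, which is Baire measurable with values in $[0,1]$. The key observations are as follows.
\begin{itemize}
\item Each $g_k$ is a convex combination of $f_i$ with $i\ge k$. Hence $|\mu|(1-g_k)\le\sup_{i\ge k}\eps_i$, and dominated convergence (with respect to $|\mu|$) gives $|\mu|(1-f)\le\eps$.
\item If $\nu\in N_j$, then for $k\ge j$ every $f_i$ used in $g_k$ satisfies $|\nu|(f_i)\le\eps_i+2\|\nu^-\|$, because $N_j\subset N_i$. So $|\nu|(g_k)\le\sup_{i\ge k}\eps_i+2\|\nu^-\|$.
\end{itemize}

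The main obstacle is passing to the limit in the second inequality: $g_k$ need not converge $|\nu|$-a.e., and we have taken $\limsup$. Fatou's lemma goes the wrong way for $\limsup$, giving only $|\nu|(\liminf g_k)\le\liminf|\nu|(g_k)$. So I would define $f:=\liminf_k g_k$ instead. This still equals the $|\mu|$-a.e.\ limit, so the $\mu$-estimate is unchanged. Fatou then gives $|\nu|(f)\le\liminf_k |\nu|(g_k)\le \eps+2\|\nu^-\|$ for every $\nu\in\bigcup_i N_i$, which completes the proof.
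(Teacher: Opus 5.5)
Your proof is correct and follows the paper's argument: Mazur's lemma in $L^2(|\mu|)$ gives tail convex combinations $g_k$ converging $|\mu|$-a.e., and linearity in $f$ carries the two bounds over to each $g_k$. The only difference is the final limiting device. The paper sets the functions to $0$ off the convergence set, so they converge everywhere, and then applies dominated convergence for each $|\nu|$; your $\liminf$ together with Fatou's lemma does the same job. Your preliminary passage to a subsequence with $\eps_i\to\eps$ is harmless but unnecessary, since you bound by $\sup_{i\ge k}\eps_i$ anyway.
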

	
\begin{proof}
  Let $\mu$ be $\eps_i$-separated from $N_i$ by a function $f_i$. Considered in $L_2(|\mu|)$,
  the sequence $f_i$ contains a weakly convergent subsequence and so by
  Mazur's Lemma there are norm convergent convex combinations
  $g_j:=\sum_{i=j}^\infty c_{i,j} f_i$. Hence there is a
  subsequence $g_{j_k}$ of $g_j$ which converges $\mu$ almost everywhere.
  Let $E$ be the (necessarily Baire) set of $x$ for which $g_{j_k}(x)$ converges. Define
  $h_k(x)=g_{j_k}(x)$ for $x\in E$ and 
  $h_k(x)=0$ otherwise, and put $f(x):=\lim_{k\to\infty} h_k(x)$.
  Then $f$ is a well defined Baire measurable map of $X$ to $[0,1]$,
  \begin{align*}
    |\mu|(1-f)&=
              \lim_{k\to\infty} |\mu|(1-g_{j_k})
              \le \limsup_{i\to\infty} |\mu|(1-f_i) \le 
              \limsup_{i\to\infty}\eps_i=\eps\\
    \intertext{and for every $\nu\in M$,}
    |\nu|(f)&=\lim_{k\to\infty} |\nu|(h_k)\le \limsup_{j\to\infty}|\nu|(g_j)\le
      \limsup_{i\to\infty} |\nu|(f_i)\\
           &\le \limsup_{i\to\infty}\eps_i +2\|\nu^-\| =\eps+2\|\nu^-\|.\qedhere
  \end{align*}
\end{proof}

\begin{cor}\label{T-0}
  Suppose $N\subset\MMR(X)$ is convexly analytic and $\mu\in\MMR(X)$ is such that
  $\mu\perp\nu$ for every $\nu\in N$.
  Then $\mu$ sits on a Baire set $B\subset X$ such that
  $|\nu|(B)\le 2\|\nu^-\|$
  for every $\nu\in N$.
\end{cor}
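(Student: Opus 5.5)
Let $\phi\colon\NN\to\MMR(X)$ be an usco map witnessing convex analyticity of $N$, so $N=\phi(\NN)$ and $\clc\phi(K)\subset N$ for every compact $K\subset\NN$. The plan is to show that $\mu$ is $0$-separated from $N$. Suppose we have a Baire measurable $f\colon X\to[0,1]$ with $|\mu|(1-f)=0$ and $|\nu|(f)\le 2\|\nu^-\|$ for all $\nu\in N$. Then $B:=\{f=1\}$ is Baire, and $\mu$ sits on $B$ because $|\mu|(X\setminus B)=0$. Moreover $|\nu|(B)\le|\nu|(f)\le2\|\nu^-\|$. So the corollary reduces to proving $0$-separation, and for this we use the standard Lusin-type "separation of analytic sets" argument together with \Cref{s1}.

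The base case concerns compact sets. Let $\sigma\in\NN$. The set $C_\sigma:=\clc\phi(\NN_\sigma)$ is compact and convex, and it is contained in $N$, so $\mu\perp\nu$ for every $\nu\in C_\sigma$. We would like to apply \Cref{rn} with $M=\{t\mu:t\in[0,1]\}$ (or just the segment through $|\mu|$) and with $C_\sigma$. The problem is that \Cref{rn} controls $\nu(g)$, not $|\nu|(g)$. Since $0\le g\le 1$, we have
\[
|\nu|(g)=\nu(g)+2\nu^-(g)\le \nu(g)+2\|\nu^-\|,
\]
which is precisely why the definition of separation carries the term $2\|\nu^-\|$. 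So with $M$ built from $|\mu|$, a compact convex set singular to $C_\sigma$, \Cref{rn} gives for each $\eps>0$ a continuous $g\colon X\to[0,1]$ with $|\mu|(1-g)<\eps$ and $|\nu|(g)<\eps+2\|\nu^-\|$ on $C_\sigma$. Hence $\mu$ is $\eps$-separated from $\phi(\NN_\sigma)$ for every $\eps>0$. Applying \Cref{s1} with constant sets $N_i$ and $\eps_i\to0$ then gives $0$-separation from $\phi(\NN_\sigma)$. The point of using $\clc$ here is that \Cref{rn} needs convexity, and convex analyticity is exactly what supplies it.

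The inductive step is a contradiction argument. Suppose $\mu$ is not $0$-separated from $N=\bigcup_k\phi(\NN_{k})$, where $\NN_k$ refers to the first coordinate. The sets $\phi(\NN_{1,\dots})$ increase with $k$, so by \Cref{s1} there is $\sigma_1$ such that $\mu$ is not $0$-separated from $\phi(\{\tau:\tau_1\le\sigma_1\})$. Indeed, otherwise the limit would give $0$-separation from the union. We continue inductively: given $\sigma_1,\dots,\sigma_{k}$, the set $\phi(\NN_{\sigma_1,\dots,\sigma_k})$ is the increasing union over $m$ of $\phi(\NN_{\sigma_1,\dots,\sigma_k,m})$. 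So \Cref{s1} yields $\sigma_{k+1}$ such that $\mu$ is not $0$-separated from $\phi(\NN_{\sigma_1,\dots,\sigma_{k+1}})$. This produces $\sigma\in\NN$.

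The main obstacle is the final step, where non-separation from every $\phi(\NN_{\sigma_1,\dots,\sigma_k})$ has to contradict the base case for $\phi(\NN_\sigma)$. Upper semicontinuity only tells us that $\phi(\NN_{\sigma_1,\dots,\sigma_k})$ lies in small neighbourhoods of the compact set $\phi(\NN_\sigma)$, whereas separation was defined with Baire functions and is not open in the weak$^*$ topology. The way around this is that the base case delivers \emph{continuous} $g$ with strict inequalities, and these persist under perturbation. More precisely, take the $g$ from \Cref{rn} for $C_\sigma$ with some $\eps$. Then $\nu\mapsto \nu(g)$ is continuous. The term $2\|\nu^-\|$ is not continuous, but we can avoid it by working with $\nu(g)<\eps$ directly. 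By \Cref{a1}, $\psi(\tau):=\clc\phi(\NN_\tau)$ is usco, so the open set $\{\nu:\nu(g)<\eps\}$ contains $\phi(\NN_{\sigma_1,\dots,\sigma_k})$ for large $k$. For those $\nu$ we get $|\nu|(g)\le\nu(g)+2\|\nu^-\|<\eps+2\|\nu^-\|$, so $\mu$ is $\eps$-separated from $\phi(\NN_{\sigma_1,\dots,\sigma_k})$ for all large $k$ (depending on $\eps$). Taking $\eps=1/j$ and applying \Cref{s1} to the decreasing sets would require a diagonal argument. The cleaner route is to run the induction with "not $\eps$-separated for some fixed $\eps>0$". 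We first note that non-$0$-separation from a set, via \Cref{s1} with constant sets, implies non-$\eps$-separation for some $\eps>0$. We then carry that fixed $\eps$ through the induction, again using \Cref{s1} for increasing unions. This yields the contradiction with the continuous $g$ obtained for $\eps/2$.
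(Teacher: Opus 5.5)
Your proposal is correct, and once you switch to the fixed-$\eps$ induction it is essentially the paper's argument. You reduce $0$-separation to $\eps$-separation for all $\eps>0$ via \Cref{s1} with $N_i:=N$, $\eps_i\to 0$. You then extract $\sigma$ by applying \Cref{s1} to increasing unions with constant $\eps_i=\eps$, and reach the contradiction with the continuous $g$ from \Cref{rn} (applied to $\{|\mu|\}$ and $C_\sigma$) plus upper semicontinuity. The ``base case'' ($0$-separation from $\phi(\NN_\sigma)$) and the first induction with non-$0$-separation, which as you noticed cannot close, are unnecessary detours; a write-up should start directly from the fixed-$\eps$ version.
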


\begin{proof}
  Our goal is to show that for every $\eps>0$, $\mu$ is $\eps$-separated from $N$. 
  Then \Cref{s1} with $N_i:=N$ and $\eps_i:=1/i$ implies that
  $\mu$ is $0$-separated from~$N$. Hence there is a Baire measurable function $f\in\CC(X,[0,1])$
  such that $|\mu|(1-f)=0$ and $|\nu|(f)\le 2\nu^-(f)$, and we see that
  the statement holds with $B:=\{x:f(x)= 1\}$.

  Suppose, for a contradiction, that for some $\eps>0$, $\mu$ is not $\eps$-separated from~$N$. By \Cref{ca0}, $N=\phi(\NN)$,
  where $\phi:\NN\to\MMR(X)$ is usco and such that for every $\sigma\in\NN$ the set $C_\sigma:=\clc\phi(\NN_\sigma)$ is
  a convex subset of~$N$.
  Since $\phi(\NN_1)\subset\phi(\NN_2)\subset\dots$ and
  $N=\bigcup_{\sigma_1=1}^\infty \phi(\NN_{\sigma_1})$, by \Cref{s1} there is $\sigma_1\in\N$ such that $\mu$ is not $\eps$-separated from
  $\phi(\NN_{\sigma_1})$. Continuing recursively, we find $\sigma_k\in\N$ such that $\mu$ is not $\eps$-separated
  from $\phi(\NN_{\sigma_1,\dots,\sigma_k})$ for each $k=1,2,\dots$.
  Letting $\sigma:=(\sigma_1,\sigma_2,\dots)$ and recalling that $C_\sigma:=\clc\phi(\NN_\sigma)$ is
  a compact and convex subset of $N$,
  we use \Cref{rn} with $M:=\{|\mu|\}$ to find $f\in C(X,[0,1])$
  such that $|\mu|(1-f)<\eps$ and $\nu(f)<\eps$ for every $\nu\in C_\sigma$.
  Since the map $\nu\in\MMR(X)\to\nu(f)$ is continuous,
  $G:=\{\nu\in\MMR: \nu(f)<\eps\}$ is an open set containing $\phi(\NN_\sigma)$.
  Hence the upper semicontinuity of $\phi$ implies 
  that there is $k$ such that $\nu \in G$ for every $\nu\in \phi(\NN_{\sigma_1,\dots,\sigma_k})$. 
  It follows that $|\nu|(f)=\nu(f)+2\nu^-(f)\le\eps+2\|\nu^-\|$ for every such $\nu$, 
  and we conclude that $\mu$
  is $\eps$-separated from $\phi(\NN_{\sigma_1,\dots,\sigma_k})$, which is the desired contradiction.
\end{proof}

\begin{cor}\label{T-1}
  Suppose $N\subset\MMR(X)$ is convexly analytic, $\mu\in\MMR(X)$ is such that
  $\mu\perp\nu$ for every $\nu\in N$ and $h\in\CC(X,[-1,1])$.
  Then $\mu$ sits on a Baire set $B\subset X$ such that
  $|\nu|(B)\le 2(\|\nu\|-\nu(h))$
  for every $\nu\in N$.
\end{cor}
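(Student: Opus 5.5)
We deduce Corollary~\ref{T-1} from Corollary~\ref{T-0} by a change of measures. The idea is to multiply every $\nu\in N$ by $h$, so that the negative part of the new measure is controlled by $\|\nu\|-\nu(h)$.

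Consider the map $T\colon\MMR(X)\to\MMR(X)$, $T\nu:=h\nu$. Since $h\in\CC(X,[-1,1])$ is bounded and continuous, $(h\nu)(g)=\nu(hg)$ with $hg\in\CB(X)$ whenever $g\in\CB(X)$. Hence $T$ is linear and weak$^*$-continuous. We let $N':=T(N)$ and check three things.

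First, $N'$ is convexly analytic. If $\phi$ witnesses convex analyticity of $N$, then $T\circ\phi$ is usco, because a continuous image of a compact set is compact and preimages of open sets under $T$ are open. For compact $K\subset\NN$, continuity and linearity of $T$ give $\clc T(\phi(K))\subset T(\clc\phi(K))$. Here $\clc\phi(K)$ is compact by the barycenter discussion, so $T$ of it is compact and convex, hence closed, and it is contained in $T(N)=N'$.

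Second, $\mu\perp h\nu$ for each $\nu\in N$, because $h\nu$ is absolutely continuous with respect to $|\nu|$ and therefore sits on any set on which $\nu$ sits.

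Third, we need the negative part of $h\nu$. We have $(h\nu)^-\le h^-\nu^++h^+\nu^-$, and on the Hahn decomposition sets this yields
\[\|(h\nu)^-\|\le \nu^+(h^-)+\nu^-(h^+)\le \tfrac12\bigl(\nu^+(1-h)+\nu^-(1+h)\bigr)=\tfrac12\bigl(\|\nu\|-\nu(h)\bigr).\]
The middle inequality uses $h^-\le (1-h)/2$ and $h^+\le(1+h)/2$, both valid because $|h|\le1$.

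Applying Corollary~\ref{T-0} to $\mu$ and $N'$ gives a Baire set $B$ on which $\mu$ sits, with $|h\nu|(B)\le 2\|(h\nu)^-\|\le \|\nu\|-\nu(h)$. It remains to pass from $|h\nu|(B)=\int_B|h|\,d|\nu|$ to $|\nu|(B)$, and this is the delicate step, since $|h|$ may be small. We use $|\nu|(B)\le \int_B|h|\,d|\nu|+\int_B(1-|h|)\,d|\nu|$ and bound the second term by $\int(1-|h|)\,d|\nu|$. Because $1-|h|\le 1-h$ and $1-|h|\le 1+h$, splitting along the Hahn decomposition gives
\[\int(1-|h|)\,d|\nu|\le \nu^+(1-h)+\nu^-(1+h)=\|\nu\|-\nu(h).\]
Adding the two bounds yields $|\nu|(B)\le 2(\|\nu\|-\nu(h))$, which is exactly the claimed inequality.

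The main thing to check carefully is the pointwise inequality $(h\nu)^-\le h^-\nu^++h^+\nu^-$. It follows because on the positive Hahn set $P$ of $\nu$ we have $h\nu=h\nu^+$, whose negative part is $h^-\nu^+$, and symmetrically on the complement of $P$ the negative part is $h^+\nu^-$.
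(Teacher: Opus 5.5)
Your proposal is correct and takes the paper's route: apply Corollary~\ref{T-0} to the convexly analytic set $\{h\nu:\nu\in N\}$, then turn the bound $|h\nu|(B)\le\|\nu\|-\nu(h)$ into the bound on $|\nu|(B)$. The only differences are in the bookkeeping. You use the Hahn decomposition and the splitting $|\nu|(B)=\int_B|h|\,d|\nu|+\int_B(1-|h|)\,d|\nu|$, where the paper uses the identity $2\|(h\nu)^-\|=\|h\nu\|-\nu(h)$ and the inequality $|\nu|(X\setminus B)\ge (h\nu)(X\setminus B)$; you also explicitly check the convex analyticity of $\{h\nu\}$ and that $\mu\perp h\nu$, which the paper takes for granted.
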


\begin{proof}
  By \Cref{T-0}, since $\{h\nu: \nu\in N\}$ is convexly analytic, $\mu$ sits on a Baire set $B$
  such that $|h\nu|(B)\le 2\|(h\nu)^-\|=\|h\nu\|-\nu(h)\le \|\nu\|-\nu(h)$.
  Hence
  \begin{align*}
    |\nu|(B)&= |\nu|(X)-|\nu|(X\setminus B)\le |\nu|(X)-(h\nu)(X\setminus B)\\
            & \le \|\nu\|-\nu(h)+|h\nu|(B)\le 2(\|\nu\|-\nu(h)).\qedhere
  \end{align*}
 \end{proof}

\begin{cor}\label{T-2}
  Suppose $N\subset\MMR(X)$ is convexly analytic, $\mu\in\MMR(X)$ is such that
  $\mu\perp\nu$ for every $\nu\in N$ and $H\subset\CC(X,[-1,1])$ is countable.
  Then $\mu$ sits on a Baire set $B\subset X$ such that
  $|\nu|(B)\le 2\inf \{\|\nu\|-\nu(h): h\in H\}$
  for every $\nu\in N$.
\end{cor}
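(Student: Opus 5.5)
The plan is to apply \Cref{T-1} once for each $h\in H$ and then intersect the resulting sets. If $H$ is empty, the infimum is $+\infty$ and $B:=X$ works. Otherwise, enumerate $H=\{h_1,h_2,\dots\}$. For each $i$, \Cref{T-1} applied to $N$, $\mu$ and $h_i$ gives a Baire set $B_i\subset X$ on which $\mu$ sits, with
\[
  |\nu|(B_i)\le 2(\|\nu\|-\nu(h_i))\quad\text{for every }\nu\in N.
\]
Put $B:=\bigcap_{i=1}^\infty B_i$. This is a countable intersection of Baire sets, so it is Baire.

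There are two things to check. First, $\mu$ sits on $B$. We have $X\setminus B=\bigcup_i (X\setminus B_i)$, and $|\mu|(X\setminus B_i)=0$ for each $i$. So $|\mu|(X\setminus B)=0$ by countable subadditivity. Hence $\mu(E)=0$ for every Baire $E\subset X\setminus B$.

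Second, for each $\nu\in N$ and each $i$, monotonicity gives
\[
  |\nu|(B)\le|\nu|(B_i)\le 2(\|\nu\|-\nu(h_i)).
\]
Taking the infimum over $i$ gives $|\nu|(B)\le 2\inf\{\|\nu\|-\nu(h):h\in H\}$.

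I do not expect a real obstacle. The only points needing care are that the family is countable, so that $B$ remains Baire and $\mu$ still sits on it, and that the sets $B_i$ may be chosen independently for each $h_i$, since the bound for $B$ only needs monotonicity of $|\nu|$.
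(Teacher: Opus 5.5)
Your proposal is correct and is exactly the paper's argument: apply \Cref{T-1} for each $h\in H$ and take $B:=\bigcap_{h\in H}B_h$, using countability of $H$ so that $B$ stays Baire and $\mu$ still sits on it. Your separate treatment of $H=\emptyset$ and your explicit check that $|\mu|(X\setminus B)=0$ are harmless additions, and you correctly keep the factor $2$ from \Cref{T-1} that the paper's written proof omits.
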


\begin{proof}
  By \Cref{T-1}, for every $h\in H$ there is a a Baire set $B_h\subset X$ such that
  $\mu$ sits on $B_h$ and
  $|\nu|(B_h)\le \|\nu\|-\nu(h)$
  for every $\nu\in N$. Since $H$ is countable, the set $B:=\bigcap_{h\in H} B_h$ has
  the stated properties.
\end{proof}

A straightforward application of \Cref{T-2} gives an extension of Rainwater's theorem to convexly analytic sets.
\begin{thm}\label{T1}
  Suppose a convexly analytic set $N\subset\MMR(X)$ satisfies \ref{sm} and $\mu\in\MMR(X)$ is such that
  $\mu\perp\nu$ for every $\nu\in N$.
  Then $\mu$ sits on a Baire set $B\subset X$ such that every $\nu\in N$ sits on $X\setminus B$.
\end{thm}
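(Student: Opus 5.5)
The plan is to apply \Cref{T-2} directly with the countable set $H\subset\CC(X,[-1,1])$ provided by \ref{sm} for the set $N$. \Cref{T-2} gives a Baire set $B\subset X$ on which $\mu$ sits and such that
\[
|\nu|(B)\le 2\inf\{\|\nu\|-\nu(h): h\in H\}\qquad\text{for every }\nu\in N.
\]
By \ref{sm}, for every $\nu\in N$ we have $\|\nu\|=|\nu|(X)=\sup\{\nu(h):h\in H\}$. Hence the infimum on the right equals $0$, and so $|\nu|(B)=0$.

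It then remains to check that $|\nu|(B)=0$ means that $\nu$ sits on $X\setminus B$. This is immediate from the definition: every Baire subset $E$ of $X\setminus(X\setminus B)=B$ satisfies $|\nu|(E)\le|\nu|(B)=0$, so $\nu(E)=0$. Since $\mu$ sits on $B$ by \Cref{T-2}, this gives the statement.

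There is essentially no obstacle, because all the work is in \Cref{T-0} through \Cref{T-2}. The only step that needs care is the one already handled in \Cref{T-2}: the intersection $\bigcap_{h\in H}B_h$ must stay a Baire set on which $\mu$ sits. This holds because $H$ is countable, and that is exactly where the countability in \ref{sm} is used.
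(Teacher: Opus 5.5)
Your proposal is correct and follows the paper's route: the paper obtains \Cref{T1} as a direct application of \Cref{T-2} with the countable set $H$ from \ref{sm}. Since $\sup\{\nu(h):h\in H\}=\|\nu\|$, the bound $2\inf\{\|\nu\|-\nu(h):h\in H\}$ vanishes, which gives $|\nu|(B)=0$ and hence that $\nu$ sits on $X\setminus B$ for every $\nu\in N$.
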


Finally, we prove a variant of the Glicksberg--König--Seever decomposition theorem (\cite{G,KS} or \cite[9.4.4]{Ru}).

\begin{thm}\label{T-4}
  Let $N\subset\MMR(X)$ be an analytic set satisfying \ref{sm}. Then for every $\mu\in\MM_s(X)$
  there are 
  a Baire function $g:X\to\R$, a Baire set $B\subset X$ and $\lambda\in\PP(N)$ such that $\int\|\nu\|\,d\lambda(\nu)<\infty$,
  $g(x)=0$ for $x\in B$, $\int|g(x)|\,d|\hat\lambda|(x)<\infty$,
  $|\nu|(B)=0$ for every $\nu\in N$, and for every Baire set $E\subset X$, 
  \begin{equation}
    \label{E}\mu(E)=\mu(E\cap B)+ \int_{E\setminus B} g(x)\,d\hat\lambda(x)
  \end{equation}
\end{thm}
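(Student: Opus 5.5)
The plan is to reduce to \Cref{T1} applied to a suitable convexly analytic enlargement of $N$, in the standard Glicksberg--König--Seever manner. Write $N=\phi(\NN)$ with $\phi$ usco. By \Cref{a1}, $\psi(\sigma):=\clc\phi(\NN_\sigma)$ is usco. Its values are compact convex sets of the form $\{\hat\lambda:\lambda\in\PP(\phi(\NN_\sigma))\}$, and these increase with $\sigma$, so $\psi(K)$ lies in a single $\psi(\sigma)$ for every compact $K$. Hence $\widetilde N:=\psi(\NN)$ is convexly analytic. Every element of $\widetilde N$ is a barycenter $\hat\lambda$ with $\lambda\in\PP(N)$ concentrated on a compact set; in particular $\int\|\nu\|\,d\lambda<\infty$.

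To use \Cref{T1} for $\widetilde N$ we need \ref{sm}, and I would circumvent this as follows. For $\hat\lambda\in\widetilde N$ we have $|\hat\lambda|\le\widehat{|\lambda|}$ in the sense that $|\hat\lambda|(E)\le\int|\nu|(E)\,d\lambda(\nu)$. Therefore a Baire set $B$ with $|\nu|(B)=0$ for all $\nu\in N$ is also $|\hat\lambda|$-null. Conversely, a measure singular to every $\nu\in N$ and sitting off such sets is what we need. So I will work directly with \Cref{T-2}, taking $H$ from \ref{sm} for $N$. This gives a set $B_h$ for every singular measure and every $\nu\in N$, and the countable intersection kills $|\nu|(B)$ for $\nu\in N$ without needing \ref{sm} for $\widetilde N$.

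Main step, the decomposition. Let $\alpha:=\sup\{|\mu|_a(\widetilde\nu)\}$, where for $\widetilde\nu\in\widetilde N$ we write $|\mu|=\rho_{\widetilde\nu}+s_{\widetilde\nu}$ (Lebesgue decomposition, $\rho_{\widetilde\nu}\ll|\widetilde\nu|$, $s_{\widetilde\nu}\perp|\widetilde\nu|$), and we take the supremum of $\|\rho_{\widetilde\nu}\|$. I would choose $\widetilde\nu_n$ with $\|\rho_{\widetilde\nu_n}\|\to\alpha$ and combine them as $\lambda:=\sum 2^{-n}\lambda_n/(1+\int\|\nu\|d\lambda_n)$, normalized. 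The resulting $\lambda\in\PP(N)$ has finite first moment, and $\hat\lambda$ dominates each $|\widetilde\nu_n|$ up to the sign issues. The sign issue is where care is needed: cancellation in $\hat\lambda$ could make $|\hat\lambda|$ small where $\widehat{|\lambda|}$ is not. I would fix this by mixing in $\nu$ together with $h\nu$-type reweightings, or more simply by replacing $\lambda$ with its image on $N$ restricted to where $\nu$ agrees in sign. I expect this to be the main obstacle, and I would first try to prove that $\rho_{\hat\lambda}$ has mass $\alpha$ using $|\hat\lambda|\ll\widehat{|\lambda|}$ plus maximality.

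Next, write $\mu=\mu_a+\mu_s$ with $\mu_a\ll|\hat\lambda|$ and $\mu_s\perp\hat\lambda$. By maximality, $\mu_s\perp\nu$ for every $\nu\in N$: otherwise we could adjoin $\nu$ to $\lambda$ with small weight and increase the absolutely continuous mass, contradicting the choice of $\alpha$. Apply \Cref{T-2} to $\mu_s$ with $H$ from \ref{sm}. This gives a Baire set $B'$ on which $\mu_s$ sits and with $|\nu|(B')\le 2\inf_h(\|\nu\|-\nu(h))=0$ for every $\nu\in N$. Since $|\hat\lambda|(B')\le\int|\nu|(B')\,d\lambda=0$, the set $B'$ is also $\mu_a$-null. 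Put $B:=B'$ and let $g$ be the Radon--Nikodym derivative $d\mu_a/d\hat\lambda$ (taken relative to $|\hat\lambda|$ and corrected by the sign density of $\hat\lambda$), set to $0$ on $B$. Then \eqref{E} holds and $\int|g|\,d|\hat\lambda|=\|\mu_a\|<\infty$.
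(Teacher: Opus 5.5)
There is a genuine gap, and it is the one you flagged yourself. You need to merge the maximizing sequence $\widetilde\nu_n$ into a single $\lambda\in\PP(N)$ without losing absolutely continuous mass to cancellation, and your proposal does not do this.

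As written, the construction can fail completely. Take $N=\{\nu_0,-\nu_0\}$ with $\nu_0\in\PP(X)$; this set is analytic, and \ref{sm} holds with $H=\{1,-1\}$. Let $\mu=\nu_0$. Then $\alpha=1$, and $\widetilde\nu_1=\nu_0$, $\widetilde\nu_n=-\nu_0$ for $n\ge2$ is a maximizing sequence. Your weights $2^{-n}/(1+\|\widetilde\nu_n\|)=2^{-n-1}$ make $\hat\lambda$ proportional to $\tfrac14\nu_0-\sum_{n\ge2}2^{-n-1}\nu_0=0$.

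Neither suggested repair works. First, $h\nu$ is not in $N$, so reweighting destroys $\lambda\in\PP(N)$. Second, ``restricting $\lambda$ to where $\nu$ agrees in sign'' has no clear meaning. The sign of each $\nu$ varies with $x$, while what you need is pointwise: the density of $\hat\lambda$ must be nonzero $|\mu|$-a.e.\ on the union of the sets where the densities of the $\widetilde\nu_n$ are nonzero. Finally, the inequality $|\hat\lambda|(E)\le\int|\nu|(E)\,d\lambda(\nu)$ only bounds the absolutely continuous part of $|\mu|$ from above, whereas you need a lower bound by $\alpha$.

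The missing idea, which is the core of the paper's proof, is to randomize the coefficients. Put $\lambda_t=\sum_i c_it_i\lambda_i$, with $c_i>0$ small enough for summability and $t\in[0,1]^{\N}$ distributed by product Lebesgue measure. Let $f_i$ be the density of $\widetilde\nu_i$ with respect to a common dominating measure. Fix $x$ with $f_j(x)\ne0$ and fix the $t_i$ for $i\ne j$. Then the density $\sum_i c_it_if_i(x)$ of $\hat\lambda_t$ is affine in $t_j$ with nonzero slope, so it vanishes for at most one value of $t_j$. Fubini then gives a $t$ for which this density is nonzero $|\mu|$-a.e.\ on $\bigcup_j\{f_j\ne0\}$. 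The paper organizes the exhaustion through Baire sets $U$ on which every $\mu$-non-null subset is non-null for some element of $\widetilde N$, rather than through your $\alpha$, but that difference is cosmetic.

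The same genericity is needed in your step ``adjoin $\nu$ with small weight''. Only countably many weights $\eps$ can create cancellation on a set of positive $|\mu|$-measure, so you must choose $\eps$ outside them.

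There is also a second, smaller gap. \Cref{T-2} has to be applied to the convexly analytic set $\widetilde N$, not to $N$. So you need $\mu_s\perp\widetilde\nu$ for every $\widetilde\nu\in\widetilde N$, not just for every $\nu\in N$. Singularity to all members of $N$ does not pass to barycenters: Lebesgue measure on $[0,1]$ is singular to every Dirac measure but is a barycenter of Dirac measures.

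Your maximality argument does give what is needed if you adjoin $\widetilde\nu\in\widetilde N$ instead of $\nu\in N$, but one more step is required. Since $\hat\lambda$ is a countable combination, it is usually not in $\widetilde N$. To contradict the definition of $\alpha$, you must pass to normalized finite truncations of $\hat\lambda+\eps\widetilde\nu$, which lie in $\widetilde N$ because $\widetilde N$ is convex. You then need that absolutely continuous mass is lower semicontinuous under norm convergence, which follows from a.e.\ convergence of densities along a subsequence together with Fatou's lemma.

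With these additions, your closing step is correct and reaches the same conclusion as the paper. That step is the Lebesgue decomposition of $\mu$ with respect to $|\hat\lambda|$, then \Cref{T-2} with $H$ from \ref{sm}, then $|\hat\lambda|(B)\le\int|\nu|(B)\,d\lambda(\nu)=0$.
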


\begin{proof}
  Choose an usco map $\phi:\NN\to X$ with $\phi(\NN)=N$ put
  \[\hat N:= \{\hat\lambda: \lambda\in \PP(\phi(\NN_\sigma))\text{ for some $\sigma\in\NN$}\}\]
  and recall that $\hat N$ is convexly analytic by \Cref{a1}.
  
  Let $\mathcal{U}$ be the collection of Baire sets $U\subset X$ for which there is $\hat\nu\in\HNN$
  such that $\hat\nu(V)\ne 0$ for every Baire set $V\subset U$ with $\mu(V)\ne 0$. Choose $U_i\in\mathcal{U}$
  for which $|\mu|(U)$, where $U:=\bigcup_i U_i$, is largest possible and let $\hat\nu_i\in\HNN$ be the corresponding measures.
  
  Define $\hat\mu\in\MMR(X)$ by $\hat\mu(E):=\mu(E\setminus U)$.
  Then $\hat\mu\perp\hat\nu$ for every $\hat\nu\in\HNN$. Hence by \Cref{T-2} there is a Baire set $B\subset X\setminus U$ such that
  $|\hat\mu|(X\setminus B)=0$ and $|\nu|(B)=0$ for every $\nu\in N$,
  hence also $|\hat\nu|(B)=0$ for every $\hat\nu\in\hat N$.

  For each $i$ there is a choice $g_i$ of the Radon-Nikodym derivative of the absolutely continuous part of $\hat\nu_i$ with respect to $\mu$
  such that $g_i(x)=0$ for $x\in B$.
  By definition of $\HNN$,
  $\hat\nu_i$ is a barycenter of some $\lambda_i\in\PP(N)$ with compact support.
  Since
  $\|\nu\|$ is bounded on the support of $\lambda_i$, there are $\kappa_i>0$ such that
  $\kappa_i\int\|\nu\|\,d\lambda_i(\nu)\le 2^{-i}$ and $\kappa_i\int|g_i(x)|\,d|\mu|(x)<2^{-i}$.

  Put $V:=\{x\in U: \sum_i \kappa_i |g_i(x)| <\infty\}$. Then $\mu(U\setminus V)=0$, and hence
  $h_i(x):=g_i(x)$ for $x\in V$ and $h_i(x):=0$ for $x\notin V$ is also
  the Radon-Nikodym derivative of the absolutely continuous part of $\hat\nu_i$ with respect to $\mu$.
  
  Given $t\in[0,1]^{\N}$ we let $\lambda_t:=\sum_i\kappa_i t_i \lambda_i$ and observe that
  $\int\|\nu\|\,d\lambda_t(\nu)\le 1$, the barycenter of $\lambda_t$ is $\nu_t:=\sum_i\kappa_i t_i \nu_i$,
  and the Radon-Nikodym derivative of the absolutely continuous part of $\nu_t$ with respect to $\mu$ is
  the function $g_t$ defined by $g_t(x):=\sum_i\kappa_i t_i g_i(x)$ for $x\in V$ and $g_t(x)=0$ otherwise.
  When $x\in V\cap U_j$ and $t_i$, $i\ne j$, are fixed, there is at most one value
  of $t_j$ such that $g_t(x)=0$. Hence $\Lambda\{t: g_t(x)=0\}$, where $\Lambda$ is the product Lebesgue measure on $[0,1]^\N$,
  and so $\mu\{x\in V\cap U_j: g_t(x)=0\}=0$ for $\Lambda$ almost every $t$ by the Fubini theorem.
  It follows that there is $t\in (0,1]^\N$ such that $g_t(x)\ne 0$ for $\mu$ almost every $x\in V$.
  Fix such a $t$, choose $c>0$ such that $\lambda:=c\lambda_t \in\PP(N)$ and put
  \[h(x):=
    \begin{cases}
      1 &\text{when $x\in V$ and $g_t(x)=0$,}\\
       h(x):=cg_t(x)& \text{for all other $x$.}
    \end{cases}
  \]    

  Let $W:=\{x\in V: g_t(x)\ne 0\}$. The restrictions of $\hat\lambda$ and $\mu$ to $W$ are mutually absolutely continuous.
  Since for every $E\subset W$, $\hat\lambda(E)=\int_E cg_t(x)\,d\mu(x)$, we get
  $\mu(E)=\int_E g(x)\,d\hat\lambda(x)$ where $g(x):=1/(cg_t(x))$ for $x\in W$. Letting $g(x)=0$ otherwise, and
  writing $|\hat\lambda|=h\lambda$ where $|h|=1$, we get
  \[\int_X |g(x)|\,d|\hat\lambda|(x)=\int_W h(x)|g(x)|/g(x)\,d\mu(x)<\infty,\]
  and, since $|\hat\lambda|(X\setminus U)=0$ and $|\mu|(U\setminus W)=0$, we see that \eqref{E} holds.
\end{proof}

\begin{cor}\label{T-5}
  Let $N\subset\MM(X)$ be an analytic set of measures on a complete separable metric space.
  Then for every $\mu\in\MM(X)$
  there are 
  a Borel function $g:X\to[0,\infty)$, a Borel set $B\subset X$ and $\lambda\in\PP(N)$ such that 
  $g(x)=0$ for $x\in B$,
  $\nu(B)=0$ for every $\nu\in N$ and 
    \[\mu(E)=\mu(E\cap B) + \int_N \int_E g(x)\,d\nu(x)\,d\lambda(\nu)\]
  for every Borel set $E\subset X$.
\end{cor}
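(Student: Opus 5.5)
The plan is to deduce the statement directly from \Cref{T-4}. I will check that its hypotheses hold and then translate its conclusion into the present setting.

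\emph{Hypotheses.} Since $X$ is metrizable, every closed set is a zero-set (take $f=\mathrm{dist}(\cdot,F)$). Hence the Baire and Borel $\sigma$-algebras coincide, and $\MM(X)$, $\PP(X)$ are the usual Borel measures. The set $N\subset\MM(X)$ satisfies \ref{sm} with $H=\{1\}$, since $|\nu|(X)=\nu(1)$ for non-negative $\nu$. (Alternatively, \ref{sm} holds because $X$ is separable metric.) So \Cref{T-4} applies to $N$ and to $\mu\in\MM(X)\subset\MMR(X)$. It gives a Borel $g_0\colon X\to\R$, a Borel set $B$ with $\nu(B)=0$ for all $\nu\in N$, and $\lambda\in\PP(N)$ with $\int\|\nu\|\,d\lambda(\nu)<\infty$. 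Moreover $g_0=0$ on $B$, $\int|g_0|\,d\hat\lambda<\infty$, and
\[\mu(E)=\mu(E\cap B)+\int_{E\setminus B} g_0\,d\hat\lambda\]
for every Borel set $E$. Here $\hat\lambda\ge0$, since $N\subset\MM(X)$, so $|\hat\lambda|=\hat\lambda$.

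\emph{Making $g$ non-negative.} Applying the identity to $E\subset\{x\notin B: g_0(x)<0\}$ gives $0\le\mu(E)=\int_E g_0\,d\hat\lambda\le0$. Hence $g_0\ge0$ for $\hat\lambda$-almost every $x\notin B$. Put $g:=\max(g_0,0)$. Then $g$ is a Borel function $X\to[0,\infty)$ vanishing on $B$, and the identity still holds with $g$ in place of $g_0$. Because $g=0$ on $B$, we may also write $\int_{E\setminus B} g\,d\hat\lambda=\int_E g\,d\hat\lambda$.

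\emph{Disintegrating the barycenter.} The barycenter was defined only through $\hat\lambda(f)=\int\nu(f)\,d\lambda(\nu)$ for $f\in\CB(X)$. I need to extend this to
\[\int_E g\,d\hat\lambda=\int_N\int_E g\,d\nu\,d\lambda(\nu).\]
I expect this to be the only real work; it goes in three steps.
\begin{enumerate}
\item Let $\mathcal F$ be the class of bounded Borel functions $f$ for which $\nu\mapsto\nu(f)$ is Borel on $N$ and $\hat\lambda(f)=\int\nu(f)\,d\lambda(\nu)$. The class $\mathcal F$ contains $\CB(X)$ and is a vector space closed under bounded monotone limits, by monotone convergence applied under $\lambda$ and under each $\nu$. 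This uses $\int\|\nu\|\,d\lambda<\infty$ for domination.
\item By the monotone class theorem (closed sets are zero-sets, so their indicators are decreasing limits of continuous functions), $\mathcal F$ contains all bounded Borel functions.
\item For $f=g\,\mathbf 1_E$ with $g\ge0$ unbounded, apply step 2 to the truncations $\min(g,n)\mathbf 1_E$ and let $n\to\infty$ by monotone convergence on both sides.
\end{enumerate}
This also settles the measurability of $\nu\mapsto\int_E g\,d\nu$, which is needed for the iterated integral to make sense. Combining this with the previous paragraph gives the stated formula.
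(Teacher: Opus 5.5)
Your proposal is correct and follows the paper's route: verify \ref{sm} (you use $H=\{1\}$, the paper appeals to separability), apply \Cref{T-4}, and replace $g$ by $\max(0,g)$ after testing \eqref{E} on subsets of $\{g<0\}$. Your monotone-class identification of $\int_E g\,d\hat\lambda$ with $\int_N\int_E g\,d\nu\,d\lambda(\nu)$, which includes the measurability of $\nu\mapsto\int_E g\,d\nu$, is a step the paper leaves implicit, and you handle it correctly.
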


\begin{proof}
  As already observed above, for complete separable metric spaces the condition \ref{sm} is automatically satisfied.
  Hence we simply replace the $g$ from \Cref{T-4} by $\max(0,g)$, which is justified 
  by observing that \eqref{E} applied to any set $D\subset\{x: g(x)<0\}$ gives
  $\int_N \int_D g(x)\,d\nu(x)\,d\lambda(\nu)=0$.
\end{proof}

\begin{rem}
  Since for every $\mu\in\MMR(X)$ and a Baire set $B$ there is a $Z_\sigma$ set $Z\subset B$ such that $|\mu|(B\setminus Z)=0$, 
  the set $B$ in each of the statements of \Cref{T-0} -- \Cref{T-5} can be taken to be $Z_\sigma$.
\end{rem}

\medskip
\noindent\footnotesize
Mathematics Institute, University of Warwick, Coventry CV4 7AL, United Kingdom.

\noindent Email: d.preiss@warwick.ac.uk

\end{document}